\newtheorem{theorem}{{\sc Theorem}}[section]
\newtheorem{prop}[theorem]{{\sc Proposition}}
\theoremstyle{remark}
\newtheorem{remark}[theorem]{{\sc Remark}}
\def\b{\mathbb }
\def\rr{\b R}
\def\nn{\b N}
\def\cc{\b C}
\def\torus{\b T}
\def\zz{\b Z}
\def\erw{\b E}
\def\ll{\b L}
\def\phi{\varphi }
\def\calb{{\mathcal B}}
\def\call{{\mathcal L}}
\def\calw{{\mathcal W}}
\def\on{\operatorname}
\begin{document}

\title[A quantitative central limit theorem for random matrices]{A quantitative central limit theorem for linear statistics of random matrix eigenvalues}
\author{Christian D\"obler}

\author{Michael Stolz}\thanks{Ruhr-Universit\"at Bochum, Fakult\"at f\"ur Mathematik, D-44780 Bochum, Germany.\\
email: christian.doebler@ruhr-uni-bochum.de and michael.stolz@ruhr-uni-bochum.de.\\ Both authors have been supported by Deutsche Forschungsgemeinschaft via SFB-TR 12.\\
Keywords: Random matrices, Haar measure, unitary group, speed of convergence, central limit theorem, traces of powers.\\
MSC 2010: 60F05, 60B15, 60B20}

\begin{abstract}
It is known that the fluctuations of suitable linear statistics of Haar distributed elements of the compact classical groups satisfy a central limit theorem. We show that 
if the corresponding test functions are sufficiently smooth, a rate of convergence of order almost $1/n$ can be obtained
using a quantitative multivariate CLT for traces of powers that was recently proven using Stein's method of exchangeable pairs.

\end{abstract}

\maketitle

\section{Introduction}

For $n \in \nn$ let $M_n$ denote a random $n \times n$ matrix, distributed according to Haar measure on one of the compact classical groups, i.e.\ 
the unitary, orthogonal, and (if $n$ is even) unitary symplectic group. Its eigenvalues $\lambda_{n1}, \ldots, \lambda_{nn}$
lie on the unit circle line $\torus$ of the complex plane $\cc$. Write $L_n = \ll_n (M_n)$ for the empirical measure $\frac{1}{n} \sum_{j=1}^n \delta_{\lambda_{nj}}$ of the $\lambda_{nj}$.
It is well known (see \cite{MeckesMeckes12+} for a recent quantitative version) that, as $n \to \infty$, $L_n$ tends a.s.\ weakly to the uniform distribution on $\torus$. Furthermore, for suitable test functions $f: \torus \to \rr$,
the fluctuations $n(L_n(f) - \erw L_n(f))$ have been proven to tend to a Gaussian limit as $n \to \infty$ (see, e.g., \cite{DiaconisEvansTAMS, SoshnikovLocal, Wieand02}). 
In fact, these results are part of a broader interest in fluctuations of linear spectral statistics of various random matrix ensembles, both from the physics 
(see the seminal paper \cite{CostinLebowitz})
and mathematics points of view (see, e.g., \cite{ChattFluct} and the references therein).\\

For Haar distributed matrices from the compact classical groups, and in the special case that $f$ is a trigonometric polynomial, 
Johansson has obtained in \cite{JohannssonAnnMath97} an exponential rate of convergence to the Gaussian limit, 
using sophisticated analytic tools related to Szeg\"o's strong limit theorem for Toeplitz determinants. In this note we are concerned with the speed of convergence 
in the case of test functions whose Fourier expansion does not necessarily terminate. 
In this more general setting we have been unable to find quantitative convergence results in the literature. 
We point out that for sufficiently smooth test functions a rate close to $\on{O}(1/n)$
is
an easy consequence of a recent quantitative multivariate CLT for a vector of traces of powers of a Haar distributed element of the compact classical 
groups, where, crucially, the length of the vector may grow with the matrix size $n$.
This CLT was proven by the present authors in \cite{DoeSt11}, using the ``exchangeable pairs'' version of Stein's method (see \cite{CGSbook} for background) and building 
upon a construction of an exchangeable pair that had already
been successfully used by Fulman \cite{Ful10} in the univariate case. Note that a different way of using Stein's method to study the linear eigenvalue statistics of different random matrix ensembles
was devised by Chatterjee in \cite{ChattFluct}.\\

In what follows we concentrate on the case of random unitary matrices. The orthogonal and symplectic cases can be treated along the same lines,
leading to the same rates of convergence towards a (this time not necessarily centered) Gaussian limit. In Section \ref{setup} we recall some 
Fourier analysis as well as the crucial quantitative multivariate CLT for traces of powers. In Section \ref{ratessmoothcase} we state and prove our main result on 
linear statistics.

\section{Set-up and background}
\label{setup}
Let $f \in \on{L}^1(\torus)$ be real valued. 
We will view $f$ as a $2\pi$-periodic function on $\rr$ by tacitly identifying $f(e^{ix})$ with $f(x)$. For $j \in \zz$ the $j$-th Fourier coefficient of $f$
is defined as
$$ \hat{f}_j = \hat{f}(j) = \frac{1}{2\pi} \int_0^{2\pi} f(x) e^{-ijx}dx.$$
Note that since $f$ is real valued, $\hat{f}(-j) = \overline{\hat{f}(j)}.$
It is a well known fact about Fourier coefficients that smoothness of the function implies quantitive information on the decay of the coefficients:

\begin{prop}
\label{coeffbound}
If $f \in \on{C}^k(\torus)$, then for all $0 \neq j \in \zz$ there holds
$$ |\hat{f}(j)| \le  \frac{\|f^{(k)}\|_1}{ |j|^k}.$$
\end{prop}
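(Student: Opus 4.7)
The plan is to relate $\hat{f}(j)$ to $\widehat{f^{(k)}}(j)$ by $k$-fold integration by parts, and then finish with the trivial bound $|\hat{g}(j)| \le \frac{1}{2\pi}\int_0^{2\pi} |g(x)|\, dx$. Starting from the defining integral and integrating by parts once with $u = f(x)$, $dv = e^{-ijx}\, dx$, the boundary contribution $-\frac{1}{2\pi ij}\bigl[f(x)e^{-ijx}\bigr]_0^{2\pi}$ vanishes because both $f$ (thanks to $f \in C^k(\torus)$) and $e^{-ijx}$ (for $j \in \zz$) are $2\pi$-periodic. This yields the identity
$$\hat{f}(j) = \frac{1}{ij}\, \widehat{f'}(j).$$

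Iterating this identity $k$ times is legitimate because $f, f', \ldots, f^{(k-1)}$ are all continuously differentiable and periodic as elements of $C^k(\torus)$, so the boundary contributions vanish at every stage of the induction. This produces
$$\hat{f}(j) = \frac{1}{(ij)^k}\, \widehat{f^{(k)}}(j),$$
and the desired estimate follows on taking absolute values together with $|\widehat{f^{(k)}}(j)| \le \frac{1}{2\pi}\int_0^{2\pi} |f^{(k)}(x)|\, dx$. Here $\|\cdot\|_1$ is understood as the normalized $L^1$ norm on $\torus$; with the un-normalized convention the same argument gives the (strictly stronger) bound $|\hat{f}(j)| \le \|f^{(k)}\|_1/(2\pi|j|^k)$, so the stated inequality holds either way.

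There is no substantive obstacle to this argument: the only point that requires care is the cancellation of boundary terms at each of the $k$ integrations by parts, and this is precisely what the hypothesis $f \in C^k(\torus)$ delivers via periodicity of every intermediate derivative. A formal induction on $k$ makes the bookkeeping entirely explicit but contributes nothing conceptual beyond the two-line computation above.
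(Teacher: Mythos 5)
Your proof is correct and is exactly the standard integration-by-parts argument that the paper outsources to its reference (Katznelson, I.4.4): the identity $\hat{f}(j) = (ij)^{-k}\,\widehat{f^{(k)}}(j)$ via $k$ integrations by parts with vanishing boundary terms, followed by the trivial $\on{L}^1$ bound on $\widehat{f^{(k)}}(j)$. Your remark about the normalization of $\|\cdot\|_1$ is also the right thing to flag, and your resolution (the stated bound holds under either convention) is accurate.
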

\begin{proof} See, e.g., \cite[I.4.4]{Katznelson}. \end{proof}

Slightly generalizing this, we will 
consider functions $f \in \on{L}^1(\torus)$ with the property that there exist $\kappa > 1$ and $C_{\kappa} > 0$ such that for all $0 \neq j \in \zz$ there holds
\begin{equation}
\label{kappa}  |\hat{f}(j)| \le  \frac{C_{\kappa}}{ |j|^{\kappa}}.
\end{equation}
Proposition \ref{coeffbound} then says that for $f \in \on{C}^k(\torus)$ $(k \ge 2)$ condition \eqref{kappa} holds with $\kappa = k$, and it follows from the results in \cite[Sec.\ 3.2.2]{GraClassical} that \eqref{kappa} holds with $\kappa = 2$ if $f$ has a Lipschitz first derivative. Note that since $|e^{ijx}| = 1$ for all $j$ and $x$, our assumption $\kappa > 1$ implies that the Fourier expansion of $f$ will converge normally, hence (by compactness of the torus) uniformly  
to $f$. So we have that

$$ \sum_{j \in \zz} \hat{f}(j) e^{ijx} = \hat{f}(0) + \sum_{j=1}^{\infty}\left( \hat{f}(j) e^{ijx} 
+ \overline{\hat{f}(j)\ e^{ijx}}\right).$$ 

Let $M_n$ be a Haar distributed element of $\on{U}_n$ and write
$$ L_n = \ll_n(M_n) = \frac{1}{n} \sum_{j = 1}^n \delta_{\lambda_{nj}},$$ where $\lambda_{n1}, \ldots, \lambda_{nn} \in \torus$ 
are
the eigenvalues (with multiplicities) of $M_n$. Observe that by Fubini's theorem and the left and right invariance of Haar measure one has
\begin{align*}
 \erw L_n(f) &= \frac{1}{n} \erw[f(\lambda_{n1}) + \ldots + f(\lambda_{nn})]\\
&= \frac{1}{n} \int_{\torus} \erw[f(t\lambda_{n1}) + \ldots + f(t\lambda_{nn})] dt \\
&= \frac{1}{n} \erw \int_{\torus} [f(t\lambda_{n1}) + \ldots + f(t\lambda_{nn})] dt \\
&=  \frac{1}{n} \erw\left[ n \int_{\torus} f(t) dt \right] = \int_{\torus} f(t) dt = \hat{f}(0).
\end{align*}
In view of this, it follows from the above that the $n$-scaled fluctuation of $L_n(f)$ has a pointwise expression
\begin{align} \label{mainexpansion} n(L_n(f) - \erw(L_n(f))) &= f(\lambda_{n1}) + \ldots + f(\lambda_{nn}) - n \hat{f}(0)\nonumber \\
&= \sum_{j=1}^{\infty} \hat{f}(j) \on{Tr}(M_n^j) + \sum_{j=1}^{\infty} \overline{\hat{f}(j) \on{Tr}(M_n^j)}.
\end{align}

The following quantitative CLT for vectors of traces of powers of Haar unitaries, proven in \cite{DoeSt11}, will make it possible to control finite sections of
$n$-dependent length of the expansion  \eqref{mainexpansion}:
Let $M = M_n$ be distributed according to Haar measure on $\on{U}_n$. For $d \in \nn,\
r = 1, \ldots, d$, consider the $r$-dimensional complex random vector
$$ W := W(d, r, n) := (\on{Tr}(M^{d-r+1}), \on{Tr}(M^{d-r+2}), \ldots, \on{Tr}(M^{d})).$$ 
Let $Z := (Z_{d-r+1} \ldots, Z_d)$ denote an
$r$-dimensional complex standard normal random vector, i.e., 
there are iid 
real random variables $X_{d-r+1}, \ldots, X_d,$ $Y_{d-r+1}, \ldots, Y_d$ with distribution $\on{N}(0, 1/2)$ such that $Z_j = X_j + i Y_j$ for $j = d-r+1, \ldots, d$. 
We take $\Sigma$ to denote the diagonal matrix $\on{diag}(d-r+1, d-r+2, \ldots, d)$ and write 
$Z_{\Sigma} := \Sigma^{1/2} Z.$ \\

Recall that the Wasserstein distance for probability distributions $\mu$, $\nu$ on $(\cc^d,\calb(\cc^d))$ is defined by 

$$d_\calw(\mu,\nu):=\sup\left\{|\int h d\mu-\int h d\nu|\,:\, h:\cc^d\rightarrow\rr\text{ and } \|h\|_{\on{Lip}}\leq 1\right\},$$
$\|h\|_{\on{Lip}}$ denoting the minimum Lipschitz constant
$$ \|h\|_{\on{Lip}} :=\sup_{x\not=y}\frac{|h(x)-h(y)|}{\|x-y\|_2}\in[0,\infty]$$ of $h$. If $\mu$ is the law of $X$ and $\nu$ is the law of $Y$, we write $d_{\calw}(X, Y) = d_{\calw}(\mu, \nu).$\\

We are now in a position to state the crucial quantitative CLT for vectors of traces of powers \cite[Thm.\ 1.1]{DoeSt11}.

\begin{prop}
\label{trofpow} If $n \ge 2d$, the Wasserstein distance between $W$ and $Z_{\Sigma}$ is
\begin{equation} 
\label{ordnungsformel}
d_\calw(W,Z_\Sigma)=\on{O}\left(\frac{\max\left\{\frac{r^{7/2}}{(d-r+1)^{3/2}},\,(d-r)^{3/2}\sqrt{r}\right\}}{n}\right)\,.\end{equation}
In particular, for $r = d$ we have
\begin{equation}\label{trofpow-speziell}
 d_{\calw}(W, Z_{\Sigma}) = \on{O}\left(\frac{d^{7/2}}{n}\right).\end{equation}
\end{prop}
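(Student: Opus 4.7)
The plan is to apply Stein's method of exchangeable pairs in its multivariate form to the complex vector $W$. The first ingredient is a suitable exchangeable pair $(M,M')$ of Haar-distributed elements of $\on{U}_n$. Following Fulman \cite{Ful10} in the scalar case, I would take $M'=UM$ where $U$ is a small random unitary rotation---for instance $U=\exp(\sqrt{t}\,A)$ with $A$ drawn from Gaussian measure on the skew-Hermitian Lie algebra, or a discrete analogue such as a Haar-random rotation supported in a uniformly chosen two-plane, with the parameter $t$ small enough to make the expansion of $(M')^k$ in $t$ tractable. Left and right Haar invariance make $(M,M')$ exchangeable, so $(W,W'):=(W(M),W(M'))$ is exchangeable as well.

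Second, I would establish the linear regression identity $\erw[W'-W\mid M]=-t\Lambda W+R$, with $\Lambda=\on{diag}(d-r+1,\ldots,d)$ and $R$ a higher-order remainder, together with the companion conditional-covariance identity for $\erw[(W'_j-W_j)\overline{(W'_k-W_k)}\mid M]$. Both reduce to the expansion of $\on{Tr}((UM)^k)$ via a noncommutative binomial expansion and the cyclic invariance of the trace, so that their expectations collapse to joint moments of traces of powers of $M$. Under the hypothesis $n\ge 2d$, the Diaconis--Shahshahani theorem makes those relevant joint moments agree exactly with those of independent complex Gaussians of variances $d-r+1,\ldots,d$; many of the leading contributions therefore cancel, and the surviving ones come from products whose total degree exceeds $n$.

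Third, with the exchangeable pair in place, the standard multivariate exchangeable-pair Stein bound (see \cite{CGSbook}) expresses $d_{\calw}(W,Z_\Sigma)$ as a sum of three pieces: a regression defect $\erw\|R\|_2/t$, a covariance defect measuring the distance between the conditional second-moment matrix and $2t\Lambda\Sigma$, and a third-moment term of the form $t^{-1}\erw\|W'-W\|_2^{3}$. Each reduces to a polynomial expectation in the entries of $W$ and their conjugates, again tractable by Diaconis--Shahshahani; optimising over $t$ and collecting orders in $n,d,r$ should yield \eqref{ordnungsformel}.

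The main obstacle will be the combinatorial bookkeeping behind the precise rate. The rescaling by $\Sigma^{1/2}$ amplifies the low-power components of $W$ relative to Lipschitz test functions, which feeds the $r^{7/2}/(d-r+1)^{3/2}$ branch of the bound when the window of exponents sits near $1$. In the complementary regime, where the window lies close to $n/2$, the third-moment term dominates and produces the $(d-r)^{3/2}\sqrt{r}$ branch. Tracking both regimes simultaneously through the three Stein error terms, and verifying that no ancillary contribution exceeds their maximum, is where the technical work concentrates; once the exchangeable pair and the regression identity are in hand, the Stein mechanism itself is essentially routine.
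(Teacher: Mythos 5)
The first thing to note is that the paper does not prove this proposition at all: it is imported verbatim as \cite[Thm.~1.1]{DoeSt11}, so the only ``proof'' in the present text is a citation. Your outline does reproduce, at a high level, the strategy of that source: Fulman's exchangeable pair obtained by perturbing $M$ with a small heat-kernel step on $\on{U}_n$, a linear regression identity with $\Lambda=\on{diag}(d-r+1,\ldots,d)$ coming from the action of the Laplacian on power sums, moment computations via Diaconis--Shahshahani/Diaconis--Evans under $n\ge 2d$, and a multivariate abstract Stein bound. So the plan points in the right direction.

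As a proof, however, it has two genuine gaps. First, the ``standard multivariate exchangeable-pair Stein bound (see \cite{CGSbook})'' that you invoke in step three does not exist off the shelf in the form you need: the available plug-in theorems are formulated for real vectors, usually with a scalar regression coefficient, and they control smooth-function metrics rather than $d_\calw$ directly. A substantial portion of \cite{DoeSt11} consists precisely of proving a new abstract exchangeable-pairs theorem for \emph{complex} random vectors with a non-scalar $\Lambda$ and then smoothing to pass to the Wasserstein distance; without that theorem your argument has nothing to plug the pair into. Second, the entire quantitative content of \eqref{ordnungsformel} --- the exponents $7/2$ and $3/2$ and the dichotomy between the two branches of the maximum --- lives exactly in the ``combinatorial bookkeeping'' you defer. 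Saying that optimising ``should yield'' the bound is not a derivation: one must actually evaluate the regression remainder and the conditional-covariance defect as explicit functions of $d$, $r$, $n$, using the exact Gaussian moment identities where they hold and controlling the contributions where they fail. A related structural point: in the heat-kernel construction one sends the time parameter $t$ to $0$ rather than optimising over it, so the cubic term $t^{-1}\erw\|W'-W\|_2^3$ vanishes in the limit and cannot be the source of the $(d-r)^{3/2}\sqrt{r}$ branch; both branches arise from the surviving regression and covariance-defect terms. Until these computations are carried out, the claimed rate is asserted rather than proved.
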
 

We will also need the following orthogonality relations for traces of powers:

\begin{prop}
\label{orth}
Let $n, i, j \in \nn$. Then
$$\erw(\on{Tr}(M_n^i) \on{Tr}(M_n^j)) = 0$$ and
$$ \erw(\on{Tr}(M_n^i) \overline{\on{Tr}(M_n^j)}) = \delta_{ij} (j \wedge n).$$
\end{prop}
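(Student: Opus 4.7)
The plan is twofold. First, a rotation-invariance argument handles all cases where the total exponent does not vanish; second, the diagonal $\erw[|\on{Tr}(M_n^i)|^2]$ is evaluated via the Frobenius character formula and Peter--Weyl orthogonality on $\on{U}_n$.

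For the symmetry step, use that $M_n \stackrel{d}{=} e^{i\theta} M_n$ under Haar measure for every $\theta \in \rr$, together with $\on{Tr}((e^{i\theta}M)^k) = e^{ik\theta}\on{Tr}(M^k)$. This gives
$$\erw[\on{Tr}(M_n^i)\on{Tr}(M_n^j)] = e^{i(i+j)\theta}\,\erw[\on{Tr}(M_n^i)\on{Tr}(M_n^j)] \quad \text{for all } \theta,$$
forcing the expectation to vanish since $i+j \ge 2$. The same argument applied to $\on{Tr}(M_n^i)\overline{\on{Tr}(M_n^j)}$ yields a factor $e^{i(i-j)\theta}$, which kills the expectation whenever $i \ne j$.

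For the diagonal, identify $\on{Tr}(M_n^i)$ with the $i$-th power-sum symmetric polynomial $p_i$ in the eigenvalues and apply Frobenius's formula
$$p_i = \sum_{\lambda \vdash i} \chi^\lambda_{(i)}\, s_\lambda,$$
where $s_\lambda$ is the Schur polynomial and $\chi^\lambda_{(i)}$ is the value of the irreducible $S_i$-character $\chi^\lambda$ on an $i$-cycle. By Murnaghan--Nakayama, $\chi^\lambda_{(i)} = (-1)^k$ if $\lambda = (i-k,1^k)$ is a hook and vanishes otherwise. The family $\{s_\lambda : \ell(\lambda) \le n\}$ consists of the irreducible polynomial characters of $\on{U}_n$, so Peter--Weyl yields $\erw[s_\lambda \overline{s_\mu}] = \delta_{\lambda\mu}$ in that range, while $s_\lambda$ vanishes identically on $\on{U}_n$ once $\ell(\lambda) > n$. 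Substituting the Frobenius expansion gives
$$\erw[|\on{Tr}(M_n^i)|^2] = \sum_{\substack{\lambda \vdash i \\ \ell(\lambda) \le n}} (\chi^\lambda_{(i)})^2 = \#\{k : 0 \le k \le \min(i-1,n-1)\} = i \wedge n.$$

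The one delicate step is this final count. When $n \ge i$, all $i$ hooks of $i$ contribute and one recovers the classical Diaconis--Shahshahani variance $i$; when $n < i$, the tall hooks with $k \ge n$ are killed on $\on{U}_n$, truncating the sum to $n$ terms and producing the factor $j \wedge n$ of the statement.
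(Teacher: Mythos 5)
Your proof is correct. The paper itself offers no argument for this proposition, only citations (unitary invariance of Haar measure for the first identity, and Diaconis--Evans for the second); your rotation-invariance step $M_n \stackrel{d}{=} e^{i\theta}M_n$ is exactly the invariance the paper alludes to, and your Frobenius/Murnaghan--Nakayama expansion of $p_i$ into hook Schur functions, combined with Schur orthogonality on $\on{U}_n$ and the vanishing of $s_\lambda$ when $\ell(\lambda)>n$, is precisely the standard argument underlying the cited Diaconis--Evans formula, including the correct truncation of the hook count to $\min(i,n)$ when $n<i$. In effect you have supplied the self-contained proof that the paper delegates to the literature.
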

\begin{proof} The first equality follows from the unitary invariance of Haar measure, see also \cite[Thm.\ 3.29]{StolzDSM} For the second equality see 
\cite[Thm.2.1(b)]{DiaconisEvansTAMS}.
\end{proof}

\section{Rates of convergence for linear statistics}
\label{ratessmoothcase}

The following theorem is the main result of this note. 
\begin{theorem}
Let $M_n$ be a Haar distributed unitary $n \times n$ matrix, and let $f \in \on{L}^1(\torus)$ be real valued and satisfy
condition \eqref{kappa} above with $\kappa > 1$. Then, as $n \to \infty$, the fluctuation
$n(L_n(f) - \erw L_n(f))$ converges in distribution to a centered Gaussian random variable with variance
$$ \sigma^2 := \sum_{j = -\infty}^{\infty} |\hat{f}_j|^2 |j| = 2 \sum_{j=1}^{\infty} |\hat{f}_j|^2 j < \infty.$$ If $\call_n$ denotes the law of  $n(L_n(f) - \erw L_n(f))$,
then there exists $C = C(f, \kappa) > 0$ such that for any $n \in \nn$ the Wasserstein distance between $\call_n$ and $\on{N}(0, \sigma^2)$ can be bounded as follows:

$$ d_\calw(\call_n, \on{N}(0, \sigma^2)) \le C\ n^{-\left(\frac{k-1}{k + 5/2}\right)}.$$

\vspace{1em}
In particular, if $f$ is in $\on{C}^{\infty}(\torus)$, then $d_\calw(\call_n, \on{N}(0, \sigma^2))$ is $\on{O}\left( \frac{1}{n^{1-\epsilon}}\right)$ for all $\epsilon > 0$.
\end{theorem}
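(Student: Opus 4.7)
The plan is to approximate $n(L_n(f) - \erw L_n(f))$ by the finite section of the expansion \eqref{mainexpansion} up to some level $d = d(n)$, apply Proposition \ref{trofpow} to the vector $W = W(d,d,n)$, and control the tail via the orthogonality relations in Proposition \ref{orth}. The parameter $d$ will be optimized at the end.

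More precisely, I would write
\[
n(L_n(f) - \erw L_n(f)) = X_d + R_d, \qquad X_d := 2\on{Re}\sum_{j=1}^d \hat{f}(j)\,\on{Tr}(M_n^j),
\]
and let $Y$ be an $N(0,\sigma^2)$ variable, decomposed as $Y = Y_d + \tilde Y$ with $Y_d \sim N(0,\sigma_d^2)$ independent of $\tilde Y \sim N(0,\sigma^2 - \sigma_d^2)$, where $\sigma_d^2 := 2\sum_{j=1}^d |\hat{f}(j)|^2 j$. The triangle inequality then gives
\[
d_\calw(\call_n, \on{N}(0,\sigma^2)) \le d_\calw(X_d, Y_d) + \erw|R_d| + \erw|\tilde Y|.
\]

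For the first term, observe that $X_d = \phi(W)$ where $\phi : \cc^d \to \rr$ is the $\rr$-linear map $\phi(w_1,\ldots,w_d) = 2\on{Re}\sum_{j=1}^d \hat{f}(j) w_j$. By Cauchy--Schwarz its Lipschitz constant is bounded by $2(\sum_{j\ge 1}|\hat{f}(j)|^2)^{1/2}$, which is finite (and independent of $d$) since $\kappa > 1/2$. A direct computation using $Z_j = X_j + iY_j$ with $X_j, Y_j \sim N(0,1/2)$ shows that $\phi(Z_\Sigma) \sim N(0, \sigma_d^2) = Y_d$. Hence, applying \eqref{trofpow-speziell} in Proposition \ref{trofpow} (valid for $d \le n/2$),
\[
d_\calw(X_d, Y_d) = d_\calw(\phi(W), \phi(Z_\Sigma)) \le \|\phi\|_{\on{Lip}} \cdot d_\calw(W, Z_\Sigma) = \on{O}\!\left(\frac{d^{7/2}}{n}\right).
\]

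For the tail terms, Proposition \ref{orth} (applied together with $\erw\on{Tr}(M_n^j) = 0$, which also follows from unitary invariance) yields that $R_d$ is centered with
\[
\on{Var}(R_d) = 2\sum_{j>d} |\hat{f}(j)|^2 (j\wedge n) \le 2\sum_{j>d} |\hat{f}(j)|^2 j \le 2 C_\kappa^2 \sum_{j>d} j^{1-2\kappa} \le C d^{2-2\kappa},
\]
using $\kappa > 1$. The Jensen inequality gives $\erw|R_d| \le C d^{1-\kappa}$, and the same reasoning gives $\erw|\tilde Y| \le (\sigma^2 - \sigma_d^2)^{1/2} \le C d^{1-\kappa}$. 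Combining,
\[
d_\calw(\call_n, \on{N}(0,\sigma^2)) \le C\!\left(\frac{d^{7/2}}{n} + d^{1-\kappa}\right).
\]
Balancing by setting $d^{\kappa + 5/2} \asymp n$, i.e.\ $d = \lfloor n^{1/(\kappa+5/2)} \rfloor$, gives the claimed bound $n^{-(\kappa-1)/(\kappa+5/2)}$. (This $d$ satisfies $d \le n/2$ for $n$ large, and small $n$ are absorbed into the constant.) In the $\on{C}^\infty$ case, Proposition \ref{coeffbound} allows $\kappa$ to be taken arbitrarily large, and the exponent $(\kappa-1)/(\kappa+5/2) \to 1$, giving the $\on{O}(n^{-(1-\epsilon)})$ conclusion.

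The main obstacle I anticipate is verifying that the Lipschitz constant of $\phi$ stays bounded independently of the truncation level $d$ (which is why one needs the stronger assumption $\kappa > 1/2$, automatically implied by $\kappa > 1$) and confirming that the distribution of $\phi(Z_\Sigma)$ is exactly the $N(0,\sigma_d^2)$ variance one expects; beyond that, the argument is a clean optimization of two competing error terms.
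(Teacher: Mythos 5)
Your proposal is correct and follows essentially the same route as the paper: truncate the Fourier expansion at level $d$, push the multivariate CLT for $W(d,d,n)$ through the $\rr$-linear functional $\phi$ (whose Lipschitz constant is bounded uniformly in $d$), control the tail via the orthogonality relations, and optimize $d \asymp n^{1/(\kappa+5/2)}$. The only (immaterial) difference is in the variance-mismatch term, where you couple $N(0,\sigma_d^2)$ to $N(0,\sigma^2)$ by adding an independent Gaussian increment (giving $(\sigma^2-\sigma_d^2)^{1/2}$) while the paper scales a common $\zeta$ by $\sigma_d$ versus $\sigma$; both bounds are dominated by the $\erw|R_d|$ term, so the final rate is identical.
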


\begin{remark}
In \cite{DiaconisEvansTAMS} the corresponding weak convergence result without a bound
on the speed of convergence is proven under the weaker assumption that 
$\sigma^2$ be finite, which amounts to requiring that $f$ be an element
of the Sobolev space $H^{1/2}(\torus)$. Since on the one hand, elements of 
$H^{1/2}(\torus)$ need not be continuous \cite[I.8.11]{Katznelson}, and on the other hand, our method of proof requires the assumption that $\kappa > 1$, which implies that $f$ must be the uniform limit of a sequence of continuous functions, the present result applies to a strictly smaller class of test functions.
\end{remark}

\begin{proof}
It follows from \eqref{kappa} that $\sigma^2 < \infty.$ 
Set $$S_n :=  \sum_{j=1}^{\infty} \hat{f}(j) \on{Tr}(M_n^j) + \sum_{j=1}^{\infty} \overline{\hat{f}(j) \on{Tr}(M_n^j)},$$
and write, for $d = d(n) \le \frac{n}{2}$ to be chosen later,
$$ S^{(1)}_n :=  \sum_{j=1}^{d} \hat{f}(j) \on{Tr}(M_n^j) + \sum_{j=1}^{d} \overline{\hat{f}(j) \on{Tr}(M_n^j)},$$
and 
\begin{equation} \label{S2} S_n^{(2)} := S_n - S_n^{(1)}.\end{equation} Then, by Proposition \ref{coeffbound} and
dominated convergence, 
\begin{align*}
\erw|S_n^{(2)}|^2 &= \sum_{i, j = d+1}^{\infty} \erw\left( \hat{f}_i \on{Tr}(M_n^i) + \overline{\hat{f}_i}\ \overline{\on{Tr}(M_n^i)}\right)
\overline{\left( \hat{f}_j \on{Tr}(M_n^j) + \overline{\hat{f}_j}\ \overline{\on{Tr}(M_n^j)}\right)}\\
&= \sum_{i, j = d+1}^{\infty} \hat{f}_i \overline{\hat{f}_j}\ \erw\left( \on{Tr}(M_n^i) \overline{\on{Tr}(M_n^j)}\right) + \hat{f}_i \hat{f}_j\ \erw\left( \on{Tr}(M_n^i) \on{Tr}(M_n^j)\right) \\
&\quad\quad +  \overline{\hat{f}_i \hat{f}_j}\ \overline{ \erw\left( \on{Tr}(M_n^i) \on{Tr}(M_n^j)\right)} +  \overline{\hat{f}_i} \hat{f}_j\ \erw\left( \overline{\on{Tr}(M_n^i)} \on{Tr}(M_n^j)\right)\\
&= 2 \sum_{j=d+1}^{\infty} |\hat{f}_j|^2 (j \wedge n),
\end{align*} 
where we have used Proposition \ref{orth} in the last step. Consequently, Proposition \ref{coeffbound} implies that 
\begin{align}
\erw|S_n^{(2)}|^2 &\le 2 \sum_{j = d+1}^{\infty} j |\hat{f}_j|^2 \le 2 \| f^{(k)}\|_1^2 \sum_{j = d+1}^{\infty} \frac{1}{ j^{2k - 1}}\nonumber\\
&\le 2 \| f^{(k)}\|_1^2 \int_d^{\infty} \frac{1}{x^{2k - 1}} dx = 2 \| f^{(k)}\|_1^2 \frac{1}{(2k - 2) d^{(2k-2)}}.\label{boundS2}
\end{align}

For $z \in \cc^d$ define $$\phi(z) = \phi_n(z) = \sum_{j=1}^d \hat{f}_j z_j + \overline{ \hat{f}_j} \overline{z_j}.$$
We write $\underline{f} = (\hat{f}_1, \ldots, \hat{f}_d)$. Then, for any $z, w \in \cc^d$, Cauchy-Schwarz implies
$$|\phi(z) - \phi(w)| = \left| \sum_{j=1}^d \hat{f}_j (z_j - w_j) + \overline{\hat{f}_j}(\overline{z_j} - \overline{w_j})
\right| \le 2 \| \underline{f}\|_2 \| z-w\|_2.$$ We have thus obtained that $$\|\phi\|_{\on{Lip}} \le
2 \| \underline{f}\|_2.$$ Note that, by Proposition \ref{coeffbound}, 
$$ \| \underline{f}\|^2_2\ = \sum_{j=1}^{d(n)} |\hat{f}_j|^2 \le \|f^{(k)}\|_1^2 \sum_{j = 1}^{\infty} \frac{1}{j^{2k}} < \infty$$ is bounded by a constant independent of $n$ and of the 
specific form of $f$.\\

Now let $\zeta$ be a real standard normal random variable, and $Z, \Sigma, Z_{\Sigma}$ as above in Section \ref{setup}.
Let $g: \rr \to \rr$ be Lipschitz with $\|g\|_{\on{Lip}} \le 1$. Then
\begin{align}
&|\erw(g(S_n)) - \erw(g(\sigma\zeta))| \nonumber\\
&\le\ |\erw(g(S_n)) - \erw(g(S_n^{(1)}))| + |\erw(g(S_n^{(1)})) - \erw(g(\phi(Z_{\Sigma})))| + |\erw(g(\phi(Z_{\Sigma}))) - \erw(g(\sigma \zeta)))|
\label{EinsZweiDrei}
\end{align}
In view of \eqref{S2} and \eqref{boundS2}, the first summand in \eqref{EinsZweiDrei} can be bounded by
$$ \|g\|_{\on{Lip}}\ \erw|S_n^{(2)}| \le \|g\|_{\on{Lip}} \sqrt{ \erw|S_n^{(2)}|^2 }
\le \|g\|_{\on{Lip}}\ \|f^{(k)}\|_1 \sqrt{ \frac{1}{(2k - 2)\ d^{2k-2}}}.
$$
Writing $W_n := (\on{Tr}(M_n), \ldots, \on{Tr}(M_n^d))$, by Proposition \ref{trofpow} the second summand in \eqref{EinsZweiDrei} has an upper bound
\begin{align*}
|\erw(g(S_n^{(1)})) - \erw(g(\phi(Z_{\Sigma})))| &= |\erw(g(\phi(W_n))) - \erw(g(\phi(Z_{\Sigma})))|\\
&= \|\phi\|_{\on{Lip}} \left| \erw \left( \frac{1}{\|\phi\|_{\on{Lip}}} (g \circ \phi)(W_n)\right) - 
\erw \left( \frac{1}{\|\phi\|_{\on{Lip}}} (g \circ \phi)(Z_{\Sigma})\right)\right|\\
&\le \|\phi\|_{\on{Lip}}\ d_{\calw}(W_n, Z_{\Sigma}) \le 2 \| \underline{f}\|_2\ C\ \frac{d^{7/2}}{n},
\end{align*}
where $C$ is the constant from Proposition \ref{trofpow}. \\

To bound the third summand in \eqref{EinsZweiDrei}, we will first study the distribution of $\phi(Z_{\Sigma})$.
Recall that $Z_{\Sigma}$ is the random vector $(Z_1, \sqrt{2} Z_2, \ldots, \sqrt{d}Z_d)$, where the $Z_j\ (j = 1, \ldots,
d)$ are iid standard complex normal random variables, i.e., 
there are iid 
real random variables $X_{1}, \ldots, X_d,$ $Y_{1}, \ldots, Y_d$ with distribution $\on{N}(0, 1/2)$ such that $Z_j = X_j + i Y_j$ for $j = 1, \ldots, d$. 
Then 
\begin{align*}
\phi(Z_{\Sigma}) &= \sum_{j=1}^d \hat{f}_j \sqrt{j} Z_j + \sum_{j=1}^d \overline{\hat{f}_j} \sqrt{j}\ \overline{Z_j}\\
&= 2 \sum_{j=1}^d \sqrt{j}\ \on{Re}( \hat{f}_j  Z_j) \\
&= 2 \sum_{j=1}^d \sqrt{j}\ \left(  \on{Re}( \hat{f}_j) X_j - \on{Im}( \hat{f}_j) Y_j\right) 
\end{align*}
$\phi(Z_{\Sigma})$ is thus a centered real normal random variable with variance
$$ 4 \sum_{j=1}^d \frac{j}{2} (\on{Re}(\hat{f}_j)^2 + \on{Im}(\hat{f}_j)^2) = 2 \sum_{j=1}^{d(n)} j |\hat{f}_j|^2
=: \sigma_n^2.$$
So we obtain
\begin{align*}
|\erw (g(\phi(Z_{\Sigma}))) - \erw(g(\sigma \zeta))| &= |\erw (g(\sigma_n \zeta)) - \erw(g(\sigma \zeta))|\\
&\le\ \|g\|_{\on{Lip}}\ \erw|\sigma_n \zeta - \sigma \zeta|\\ &\le \|g\|_{\on{Lip}}\ |\sigma_n - \sigma|\ \erw|\zeta|\\ &\le  \|g\|_{\on{Lip}}\ |\sigma_n - \sigma|.
\end{align*}
Now,
\begin{align*}
|\sigma_n^2 - \sigma^2| &= 2 \sum_{j = d+1}^{\infty} j |\hat{f}_j|^2 \le 2 \|f^{(k)}\|_1^2\ \sum_{j = d+1}^{\infty} \frac{1}{j^{2k - 1}}\\ &\le 
2 \|f^{(k)}\|_1^2\ \int_d^{\infty} \frac{1}{x^{2k-1}} dx\\
&= 2 \|f^{(k)}\|_1^2\ \frac{1}{(2k - 2)\ d^{2k - 2}}.\end{align*}
Since $|\sigma_n^2 - \sigma^2| = |(\sigma_n - \sigma) (\sigma_n + \sigma)|$, we have that
$$ |\sigma_n - \sigma| = \frac{|\sigma_n^2 - \sigma^2|}{ \sigma_n + \sigma} \le \frac{| \sigma_n^2 - \sigma^2|}{\sigma},$$
and the third term in \eqref{EinsZweiDrei} may thus be bounded by
\begin{equation}
\label{BoundDrei}
2 \|g\|_{\on{Lip}}\  \|f^{(k)}\|_1^2\  \frac{1}{ \sigma (2k-2)\ d^{2k-2}}.
\end{equation}

Comparing the bounds that we have obtained for the individual summands in \eqref{EinsZweiDrei}, we obtain that
\begin{align}
\label{ZusammenschauSchranken}
|\erw(g(S)) - \erw(g(\sigma \zeta))| &\le \|g\|_{\on{Lip}}\ C_{f, k}\ \max\left\{ \frac{1}{d^{k-1}},\ \frac{d^{7/2}}{n}\right\}\nonumber\\
&\le  \|g\|_{\on{Lip}}\ C_{f, k}\ \left( \frac{1}{d^{k-1}} + \frac{d^{7/2}}{n}\right),
\end{align}
where $C_{f, k}$ depends only on $f$ and $k$.\\

One verifies that, on $]0, \infty[$, the function $f(x) :=  \frac{1}{x^{k-1}} + \frac{x^{7/2}}{n}$ has a unique global minimum at
$$ x_0 := \left(\frac{2(k-1)}{7}\right)^{\frac{2}{5+2k}} n^{\frac{2}{5+2k}}.$$
Furthermore, $f$ is strictly falling on $]0, x_0[$ and strictly growing on $]x_0, \infty[$. Setting $d_1 := \lfloor x_0 \rfloor$ and $d_2 := d_1 + 1$, we see
that the minimizer of $f$ on the positive integers is in $\{ d_1, d_2\}$. \\

There are $0 < \alpha < \beta$, depending only on $k$, such that for all $n \in \nn$ one has 
$$ \alpha  n^{\frac{2}{5+2k}} \le d_1 < d_2 \le \beta  n^{\frac{2}{5+2k}}.$$ Hence,
\begin{align*}
 f(x_0) &\le \frac{1}{d_1^{k-1}} + \frac{d_2^{7/2}}{n} \le \frac{1}{\alpha^{k-1} n^{\frac{2k - 2}{2k + 5}}} + \beta^{7/2} n^{\left( \frac{7}{5+2k} - 1\right)}\\
 &= \frac{1}{\alpha^{k-1} n^{\frac{2k - 2}{2k + 5}}} + \beta^{7/2} n^{- \frac{2k - 2}{2k + 5}} = \gamma n^{- \frac{2k - 2}{2k + 5}},
\end{align*}
where $\gamma := \frac{1}{\alpha^{k-1}} + \beta^{7/2}.$ Comparing with \eqref{ZusammenschauSchranken} yields the theorem.
\end{proof}


\begin{thebibliography}{CGS11}

\bibitem[CGS11]{CGSbook}
Louis H.~Y. Chen, Larry Goldstein, and Qi-Man Shao.
\newblock {\em Normal approximation by {S}tein's method}.
\newblock Probability and its Applications (New York). Springer, Heidelberg,
  2011.

\bibitem[Cha09]{ChattFluct}
Sourav Chatterjee.
\newblock Fluctuations of eigenvalues and second order {P}oincar\'e
  inequalities.
\newblock {\em Probab. Theory Related Fields}, 143(1-2):1--40, 2009.

\bibitem[CL95]{CostinLebowitz}
Ovidiu Costin and Joel~L. Lebowitz.
\newblock Gaussian fluctuation in random matrices.
\newblock {\em Phys. Rev. Lett.}, 75:69--72, Jul 1995.

\bibitem[DE01]{DiaconisEvansTAMS}
Persi Diaconis and Steven~N. Evans.
\newblock Linear functionals of eigenvalues of random matrices.
\newblock {\em Trans. Amer. Math. Soc.}, 353(7):2615--2633, 2001.

\bibitem[DS11]{DoeSt11}
Christian D{\"o}bler and Michael Stolz.
\newblock Stein's method and the multivariate {CLT} for traces of powers on the
  classical compact groups.
\newblock {\em Electron. J. Probab.}, 16:2375--2405, 2011.

\bibitem[Ful10]{Ful10}
Jason Fulman.
\newblock Stein's method, heat kernel, and traces of powers of elements of
  compact {L}ie groups.
\newblock {\em preprint, available on arXiv.org}, 2010.

\bibitem[Gra10]{GraClassical}
Loukas Grafakos. \newblock {\em Classical Fourier Analysis}.
\newblock Graduate Texts in Mathematics 249. Springer, second edition, 
2008.

\bibitem[Joh97]{JohannssonAnnMath97}
Kurt Johansson.
\newblock On random matrices from the compact classical groups.
\newblock {\em Ann. of Math. (2)}, 145(3):519--545, 1997.

\bibitem[Kat04]{Katznelson}
Yitzhak Katznelson.
\newblock {\em An introduction to harmonic analysis}.
\newblock Cambridge Mathematical Library. Cambridge University Press,
  Cambridge, third edition, 2004.

\bibitem[MM12+]{MeckesMeckes12+}
Elizabeth Meckes and Mark Meckes. 
\newblock Concentration and convergence rates for spectral measures of random matrices. \newblock To appear in {\em Probab. Theory Related Fields}.

\bibitem[Sos00]{SoshnikovLocal}
Alexander Soshnikov.
\newblock The central limit theorem for local linear statistics in classical
  compact groups and related combinatorial identities.
\newblock {\em Ann. Probab.}, 28(3):1353--1370, 2000.

\bibitem[Sto05]{StolzDSM}
Michael Stolz.
\newblock On the {D}iaconis-{S}hahshahani method in random matrix theory.
\newblock {\em J. Algebraic Combin.}, 22(4):471--491, 2005.

\bibitem[Wie02]{Wieand02}
Kelly Wieand. \newblock Eigenvalue distributions of random unitary matrices.
\newblock {\em  Probab. Theory Related Fields} 123(2):202--224, 2002.

\end{thebibliography}
\end{document}